\begin{document}

\vspace*{-0.5in}
\begin{center}\Small{Mathematical Inequalities \& Applications,  \textbf{10}  (2007), pp. 723--726\\
http://dx.doi.org/10.7153/mia-10-66} \end{center}
 \vspace*{0.5in}

\numberwithin{equation}{section}
\newtheorem{theorem}{Theorem}[section]

\newtheorem{lemma}[theorem]{Lemma}

\theoremstyle{remark}
\newtheorem*{remark}{Remark}

\setcounter{page}{723}

\title[On bounds of matrix eigenvalues]{On bounds of matrix eigenvalues}

\author{Jinhai Chen}
\address{Hong Kong Polytechnic University, Kowloon, Hong Kong}

\email{majhchen@gmail.com}

\subjclass[2000]{15A18, 15A60.}

\keywords{Trace, Rank, Eigenvalue}

\begin{abstract}
In this paper, we give estimates for both upper and lower bounds of eigenvalues  of a simple
matrix. The estimates are shaper than the known results.
\end{abstract}

\maketitle

\section{Introduction}\label{Int}
As is well known, the eigenvalues of a matrix play an important role
in solving linear systems \cite{OA94, AG97, JW01}, especially in the
perturbation problems \cite{GHCF94, RSV62}. The purpose of this note
is to give a specific estimate of the eigenvalues.

Let $A=(a_{ij})$ be an $n\times n$ complex matrix with conjugate
transpose $A^*$, $\overline{A}$ denote the conjugate, and ${\rm tr}
A$ represent the trace of matrix $A$. Let $\lambda_1,
\lambda_2,\cdots, \lambda_{n}$ be the eigenvalues of $A$, then
$$\sum_{i=1}^{n} |\lambda_i|^2 \leq \|A\|^2=\sum_{i,j=1}^{n}|a_{ij}|^2= {\rm tr}(AA^*),$$
where $\|A\|$ denotes the Frobenius norm of $A$. Let

$$\Re_A=\frac{A+A^*}{2},$$
$$\Im_A=\frac{A-A^*}{2i},$$
we call $\Re_A$ the Hermitian real part and $\Im_A$ the Hermitian
imaginary part of $A$. Let
$$q_A=\|A\|^2 -\frac{|{\rm tr} (A)|^2}{n},$$
$$\Delta_A=\frac{\|AA^*-A^*A\|^2}{2}.$$

\section{Main theorem}

\begin{theorem}\label{t10}
Suppose $\lambda$ is an eigenvalue of an $n\times n$ complex matrix
$A$ with geometric multiplicity $t$, then\begin{equation}\label{00}
\left|\lambda-\frac{{\rm tr}(A)}{n}\right|\leq
\sqrt{\frac{n-t}{(2n-t)t}}\sqrt{\frac{n-t}{n}q_A+\sqrt{q_A^2-\frac{(2n-t)t}{n^2}\Delta_A}}.
\end{equation}
 \end{theorem}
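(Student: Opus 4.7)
The plan is to shift $A$ by $\lambda I$ rather than by the centroid, which turns out to be the move that produces the precise algebraic shape of~\eqref{00}. Set $A'=A-\lambda I$ and $\alpha={\rm tr}(A)/n$. Scalar shifts preserve the commutator, so $\Delta_{A'}=\Delta_{A}$; the quantity $q$ is also shift-invariant, so $q_{A'}=q_{A}$. Finally ${\rm tr}(A')={\rm tr}(A)-n\lambda=-n(\lambda-\alpha)$, whence
\[
\|A'\|^{2}=q_{A'}+|{\rm tr}(A')|^{2}/n=q_{A}+n|\lambda-\alpha|^{2}.
\]
These three identities rewrite the right-hand side of~\eqref{00} entirely in terms of $A'$.

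Let $\nu_{1},\dots,\nu_{n}$ be the eigenvalues of $A'$. Since $\lambda$ has geometric (hence algebraic) multiplicity at least $t$ in $A$, at least $t$ of the $\nu_{i}$ vanish; after relabeling, $\nu_{1}=\cdots=\nu_{t}=0$ and $\sum_{i>t}\nu_{i}={\rm tr}(A')$. Cauchy--Schwarz applied to the $n-t$ numbers $\nu_{t+1},\dots,\nu_{n}$ gives
\[
\sum_{i=1}^{n}|\nu_{i}|^{2}=\sum_{i>t}|\nu_{i}|^{2}\;\ge\;\frac{|{\rm tr}(A')|^{2}}{n-t}\;=\;\frac{n^{2}|\lambda-\alpha|^{2}}{n-t}.
\]

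The main analytic ingredient is the refinement of Schur's inequality (due to Kress, de Vries, and Wegmann)
\[
\Bigl(\sum_{i=1}^{n}|\lambda_{i}(M)|^{2}\Bigr)^{\!2}+\Delta_{M}\;\le\;\|M\|^{4},
\]
valid for every complex matrix $M$. I would apply it with $M=A'$ and combine with the Cauchy--Schwarz estimate above together with the identities $\Delta_{A'}=\Delta_{A}$ and $\|A'\|^{2}=q_{A}+n|\lambda-\alpha|^{2}$ to obtain
\[
\Bigl(\frac{n^{2}|\lambda-\alpha|^{2}}{n-t}\Bigr)^{\!2}+\Delta_{A}\;\le\;\bigl(q_{A}+n|\lambda-\alpha|^{2}\bigr)^{2}.
\]
Establishing this refinement is the step I expect to be the main obstacle; the standard route goes via Schur triangularization $A'=U(D+N)U^{*}$, writing $A'(A')^{*}-(A')^{*}A'=(D^{*}N-ND^{*})+(N^{*}D-DN^{*})+(N^{*}N-NN^{*})$, and then carefully tracking the Frobenius norm of each piece and of the cross terms between them.

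Finally, expanding the last display as a quadratic in $x=|\lambda-\alpha|^{2}$ and using the identity $(n-t)^{2}+t(2n-t)=n^{2}$ to merge the $x^{2}$-coefficients reduces it to
\[
\frac{n^{2}(2n-t)t}{(n-t)^{2}}\,x^{2}-2nq_{A}\,x+\Delta_{A}-q_{A}^{2}\;\le\;0.
\]
The quadratic formula then returns~\eqref{00} directly; the discriminant $q_{A}^{2}-\frac{(2n-t)t}{n^{2}}\Delta_{A}$ is automatically nonnegative since $(2n-t)t\le n^{2}$ and the refinement applied to $A$ itself already gives $\Delta_{A}\le q_{A}^{2}$. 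All the remaining work past the quadratic formula is routine algebra.
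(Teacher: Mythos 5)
Your proposal is correct and takes essentially the same route as the paper: shift by $\lambda I$ (noting $q$ and $\Delta$ are shift-invariant), apply the Kress--de Vries--Wegmann inequality $(\sum|\nu_i|^2)^2+\Delta_M\le\|M\|^4$ to the shifted matrix, control $|{\rm tr}(\lambda I-A)|^2$ by Cauchy--Schwarz over the at most $n-t$ nonzero eigenvalues, and solve the resulting quadratic in $|\lambda-{\rm tr}(A)/n|^2$. The only cosmetic differences are that the paper simply cites the KdVW inequality as a known lemma (so it is not an obstacle), and obtains the count of nonzero eigenvalues via ${\rm rank}(\lambda I-A)\le n-t$ rather than via algebraic multiplicity.
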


\begin{theorem}\label{c11}
Suppose $\lambda_{\Re_A}$, $\lambda_{\Im_A}$ are the eigenvalues of
an $n\times n$ complex matrices $\Re_A$ and $\Im_A$ with geometric
multiplicity $t$, respectively,  then
\begin{equation}\label{10}
\left|\lambda_{\Re_A}-\frac{{\rm tr} (\Re_A)}{n}\right|\leq
\sqrt{\frac{n-t}{nt}q_{\Re_A}},
\end{equation}
\begin{equation}\label{11}
\left|\lambda_{\Im_A}-\frac{{\rm tr} (\Im_A)}{n}\right|\leq
\sqrt{\frac{n-t}{nt}q_{\Im_A}}.
\end{equation}
 \end{theorem}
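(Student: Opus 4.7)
The plan is to deduce both inequalities (\ref{10}) and (\ref{11}) directly from Theorem~\ref{t10}, applied respectively to the matrices $\Re_A$ and $\Im_A$ in place of $A$. Nothing new needs to be proved about eigenvalues; everything reduces to recognizing that the ``commutator correction'' $\Delta$ in Theorem~\ref{t10} vanishes for Hermitian matrices.

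The key observation is that $\Re_A$ and $\Im_A$ are Hermitian, hence normal. For any normal matrix $M$ one has $MM^{*}=M^{*}M$, so $\|MM^{*}-M^{*}M\|=0$ and therefore $\Delta_{M}=0$. Consequently, when Theorem~\ref{t10} is specialized to $M=\Re_A$ (respectively $M=\Im_A$), the radicand inside the inner square root reduces to $q_{M}^{2}$. Since $q_{M}=\|M\|^{2}-|{\rm tr}(M)|^{2}/n\ge 0$ (a Cauchy--Schwarz fact applied to the eigenvalues of $M$, using $\sum|\lambda_i|^2 \ge |\sum \lambda_i|^2 / n$), we may extract $\sqrt{q_{M}^{2}}=q_{M}$.

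What remains is a short algebraic simplification: the bound in (\ref{00}) collapses to
$$\sqrt{\frac{n-t}{(2n-t)t}}\,\sqrt{\frac{n-t}{n}q_{M}+q_{M}}=\sqrt{\frac{n-t}{(2n-t)t}\cdot\frac{2n-t}{n}\,q_{M}}=\sqrt{\frac{n-t}{nt}\,q_{M}},$$
which is precisely (\ref{10}) when $M=\Re_A$ and (\ref{11}) when $M=\Im_A$. I do not anticipate any genuine obstacle: the substantive analytic work has already been carried out in Theorem~\ref{t10}, and the present theorem is the clean specialization that appears once $\Delta$ is known to vanish. The only point requiring a small comment is nonnegativity of $q_{M}$, which guarantees that $\sqrt{q_{M}^{2}}$ simplifies without an absolute value.
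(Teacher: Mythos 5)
Your proposal is correct and follows essentially the same route as the paper: both apply Theorem~\ref{t10} to the Hermitian (hence normal) matrices $\Re_A$ and $\Im_A$, use $\Delta=0$ to collapse the inner square root to $q$, and simplify $\frac{n-t}{n}q+q=\frac{2n-t}{n}q$ to obtain the stated bound. The only cosmetic difference is that the paper phrases this as the general inequality between the two bounds with equality precisely when $\Delta_A=0$, whereas you substitute $\Delta=0$ directly; your added remark that $q\ge 0$ justifies $\sqrt{q^2}=q$ is a welcome small clarification.
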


\section{Proof of theorem}

Before giving the proof of Theorems \ref{t10} and \ref{c11}, we
present some lemmas.

\begin{lemma}[see \cite{RHR74}]\label{l10}
Let $\lambda_1, \lambda_2,\dots, \lambda_{n}$ be the eigenvalues of
an $n\times n$ complex matrix $A$, then
$$\sum_{j=1}^{n}|\lambda_j|^2\leq\sqrt{\|A\|^4-\Delta_A}.$$
 \end{lemma}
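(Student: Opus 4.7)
My plan is to prove the lemma via Schur's unitary triangularization. Writing $A = U(D+N)U^{*}$ with $U$ unitary, $D = \mathrm{diag}(\lambda_{1},\dots,\lambda_{n})$, and $N$ strictly upper triangular, the unitary invariance of $\|\cdot\|$ and of the commutator $AA^{*}-A^{*}A$ lets me replace $A$ by $T=D+N$ throughout. Frobenius orthogonality of $D$ and $N$ gives
\[
\sum_{j=1}^{n}|\lambda_{j}|^{2} \;=\; \|D\|^{2} \;=\; \|A\|^{2}-\|N\|^{2},
\]
which already recovers Schur's classical bound $\sum_{j}|\lambda_{j}|^{2}\le\|A\|^{2}$. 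After squaring the conclusion and using the factorization $\|A\|^{4}-\|D\|^{4} = \|N\|^{2}(2\|D\|^{2}+\|N\|^{2})$, the lemma is equivalent to the commutator estimate
\[
\|AA^{*}-A^{*}A\|^{2} \;\le\; 4\|N\|^{2}\|D\|^{2} + 2\|N\|^{4}.
\]

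To establish this, I would expand $TT^{*}-T^{*}T$. Since $D$ is normal,
\[
TT^{*}-T^{*}T \;=\; [N,D^{*}] + [D,N^{*}] + [N,N^{*}],
\]
in which $[N,D^{*}]$ is strictly upper triangular with entries $N_{ij}(\overline{\lambda_{j}}-\overline{\lambda_{i}})$ for $i<j$, $[D,N^{*}] = [N,D^{*}]^{*}$ is its strictly lower adjoint, and $[N,N^{*}]$ is Hermitian. Splitting $[N,N^{*}]$ into its strict-upper, diagonal, and strict-lower parts and invoking the Frobenius orthogonality of these three triangular subspaces decomposes $\|TT^{*}-T^{*}T\|^{2}$ into pieces that can be estimated separately.

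The quantitative inputs are then (i) the elementary bound $|\lambda_{i}-\lambda_{j}|^{2}\le 2(|\lambda_{i}|^{2}+|\lambda_{j}|^{2})$, which after summation entrywise yields $\|[N,D^{*}]\|^{2}\le 2\|N\|^{2}\|D\|^{2}$ and hence, together with its adjoint, the main $4\|N\|^{2}\|D\|^{2}$ term; and (ii) the submultiplicativity bound $\|NN^{*}\|\le\|N\|^{2}$ for the residual $[N,N^{*}]$ contribution, supplying the $2\|N\|^{4}$ correction. The main obstacle, in my view, is the cross term between $[N,D^{*}]$ and the strict-upper part of $[N,N^{*}]$, which share the same triangular subspace: a naive triangle inequality loses a factor of $2$ and misses the target constants, so one must expand the Frobenius inner product $\langle[N,D^{*}],[N,N^{*}]\rangle$ entrywise and either exhibit a cancellation or absorb the excess against the trace-zero diagonal $\mathrm{diag}[N,N^{*}]$, whose entries $\sum_{k>i}|N_{ik}|^{2}-\sum_{k<i}|N_{ki}|^{2}$ are sign-balanced. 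Sharpness when $A$ is normal (so $N=0$ and both sides vanish) is a useful sanity check throughout.
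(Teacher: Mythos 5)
First, a point of comparison: the paper does not prove this lemma at all --- it is imported verbatim from Kress, de Vries and Wegmann \cite{RHR74} --- so you are attempting to supply an argument where the paper supplies only a citation. Your reduction is correct and is the natural one: replacing $A$ by its Schur form $T=D+N$, the claim is equivalent (after squaring) to $\Delta_A\le\|A\|^4-\|D\|^4=\|N\|^2\bigl(2\|D\|^2+\|N\|^2\bigr)$, i.e.\ to $\|AA^*-A^*A\|^2\le 4\|N\|^2\|D\|^2+2\|N\|^4$, and your decomposition $TT^*-T^*T=[N,D^*]+[N,D^*]^*+[N,N^*]$ is right. Your two separate estimates are also provable: $\|[N,D^*]\|^2=\sum_{i<j}|N_{ij}|^2|\lambda_i-\lambda_j|^2\le 2\|N\|^2\|D\|^2$ follows from $|\lambda_i-\lambda_j|^2\le 2(|\lambda_i|^2+|\lambda_j|^2)\le 2\|D\|^2$, and $\|[N,N^*]\|^2=2\|NN^*\|^2-2\|N^2\|^2\le 2\|N\|^4$ --- though note that the route you actually name (triangle inequality plus submultiplicativity) only yields $4\|N\|^4$; you need the trace identity to get the constant $2$.

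The genuine gap is the one you flag and then leave unresolved: the cross term between $[N,D^*]$ and the strictly upper triangular part of $[N,N^*]$. This is not a removable technicality, because both of your main estimates can be attained with equality simultaneously: for $N=E_{12}$ (the matrix unit with $1$ in position $(1,2)$) and $D=\mathrm{diag}(1,-1)$ one has $\|[N,D^*]\|^2=4=2\|N\|^2\|D\|^2$, $\|[N,N^*]\|^2=2=2\|N\|^4$, and the target inequality is itself an equality ($10=10$). Hence there is no uniform slack available to absorb a positive cross term; any completion of your plan must trade the cross term off against the \emph{deficits} in the two main bounds, and exhibiting that trade-off is essentially the entire content of the lemma --- "either exhibit a cancellation or absorb the excess" is a description of the problem, not a solution to it. A way to close the argument that bypasses the triangular bookkeeping entirely: by cyclicity of the trace, $\|AA^*-A^*A\|^2=2\,\mathrm{tr}\bigl((AA^*)^2\bigr)-2\|A^2\|^2=2\sum_i\sigma_i^4-2\|A^2\|^2$, where $\sigma_i$ are the singular values of $A$; Schur's inequality applied to $A^2$ gives $\|A^2\|^2\ge\sum_i|\lambda_i|^4$, and Schur's inequality applied to the second compound matrix of $A$ gives $\sum_{i<j}|\lambda_i|^2|\lambda_j|^2\le\sum_{i<j}\sigma_i^2\sigma_j^2$; adding these yields $\|AA^*-A^*A\|^2\le 2\bigl(\sum_i\sigma_i^2\bigr)^2-2\bigl(\sum_i|\lambda_i|^2\bigr)^2$, which is exactly the lemma.
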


\begin{lemma}\label{l11}
Let $A$ be an $n\times n$ complex matrix, ${\rm rank} (A)$ represent the rank of
$A$. Then
$$|{\rm tr} (A)|^2 \leq {\rm rank} (A)\sqrt{\|A\|^4-\Delta_A}.$$
 \end{lemma}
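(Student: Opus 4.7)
The plan is to reduce this to Lemma \ref{l10} via a Cauchy--Schwarz step that exploits the link between rank and the number of nonzero eigenvalues.

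First I would write $|{\rm tr}(A)|^2 = \bigl|\sum_{i=1}^{n}\lambda_i\bigr|^2$, where $\lambda_1,\dots,\lambda_n$ are the eigenvalues of $A$. Let $r={\rm rank}(A)$ and let $S=\{i:\lambda_i\neq 0\}$. A standard fact (visible from the Schur triangularization $A=UTU^*$, since ${\rm rank}(A)={\rm rank}(T)$ and $T$'s diagonal entries are the eigenvalues) gives $|S|\leq r$.

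Next I would apply the Cauchy--Schwarz inequality on the reduced sum over $S$:
\begin{equation*}
\Bigl|\sum_{i=1}^{n}\lambda_i\Bigr|^2
=\Bigl|\sum_{i\in S}\lambda_i\Bigr|^2
\leq |S|\sum_{i\in S}|\lambda_i|^2
\leq r\sum_{i=1}^{n}|\lambda_i|^2.
\end{equation*}
Then Lemma \ref{l10} yields $\sum_{i=1}^{n}|\lambda_i|^2\leq \sqrt{\|A\|^4-\Delta_A}$, and combining the two inequalities gives $|{\rm tr}(A)|^2 \leq r\sqrt{\|A\|^4-\Delta_A}$, which is exactly the claim.

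The only nontrivial point in the plan is the inequality $|S|\leq {\rm rank}(A)$, which requires justifying that the number of nonzero eigenvalues is at most the rank. This is the main step to articulate carefully, but it is classical via Schur's theorem (or, equivalently, from the fact that $0$ is an eigenvalue of algebraic, hence geometric, multiplicity at least $n-r$ whenever $A$ has rank $r$, since $\dim\ker A=n-r$). Everything else is a one-line Cauchy--Schwarz estimate and a direct appeal to Lemma \ref{l10}.
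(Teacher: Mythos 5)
Your proof is correct and follows essentially the same route as the paper's: bound the number of nonzero eigenvalues by the rank via Schur triangularization, apply Cauchy--Schwarz to the sum of nonzero eigenvalues, and invoke Lemma \ref{l10}. (One cosmetic slip: in your final parenthetical the implication should read ``geometric, hence algebraic, multiplicity at least $n-r$,'' not the reverse; the argument itself is fine.)
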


\begin{proof}
Let $\lambda_1, \lambda_2,\cdots, \lambda_{n}$ be the eigenvalues of
$A$, $\Lambda={\rm diag} (\lambda_1, \lambda_2,\cdots,
\lambda_{n})$. Suppose that the number of nonzero eigenvalues is $k$. Without loss of  generality, we can denote the nonzero eigenvalues of $A$ by $\lambda_1,
\lambda_2,\cdots, \lambda_{k}$. Then it is easily seen that
$$k\leq {\rm rank} (A).$$

Now suppose that $R=\Lambda+M$ is a Schur triangular form of $A$, i.e.,
$A=U^*RU$, $U$ is unitary orthogonal, $\Lambda$ is diagonal and $M$
is upper triangular. From Lemma \ref{l10}, we have
$$\sum_{j=1}^{k}|\lambda_j|^2\leq\sqrt{\|A\|^4-\Delta_A}.$$
Then
$$|{\rm tr} (A)|^2 =\left|\sum_{j=1}^{k}\lambda_j\right|^2\leq k\sum_{j=1}^{k}|\lambda_j|^2 \leq {\rm rank} (A)\sum_{j=1}^{k}|\lambda_j|^2 \leq {\rm rank}
(A)
\sqrt{\|A\|^4-\Delta_A}.$$ This shows the validity of conclusion.
 \end{proof}

Next, we provide the proof of Theorem \ref{t10}.

\begin{proof}[Proof of Theorem {\rm\ref{t10}}]
Let $M=\lambda I-A$, where $I$ is the $n\times n$  identity  matrix,
$\lambda$ is the $t$ multiple eigenvalue of $A$.  Then we have
$${\rm rank} (M)={\rm rank} (\lambda I-A)\leq n-t,$$
and the following equality
$$\Delta_M=\frac{\|(\lambda I-A)({
\overline{\lambda}I-A^*})-({ \overline{\lambda}I-A^*})(\lambda
I-A)\|}{2}=\Delta_A.$$

From  Lemma \ref{l11}, we have
\begin{align}\label{06}|{\rm tr} (M)|^2  &\leq {\rm rank} (M)
\sqrt{\|M\|^4-\Delta_M}\nonumber\\ &\leq
(n-t)\sqrt{\|M\|^4-\Delta_M}\leq (n-t)\sqrt{\|\lambda
I-A\|^4-\Delta_A}.\end{align}
In addition, by simple manipulations, we obtain
\begin{align}\label{01}
|{\rm tr} (\lambda I-A)|^2&={\rm tr}(\lambda I-A){\rm tr}\left({
\overline{\lambda}I-A^*}\right)\nonumber\\
 &=n^2|\lambda|^2-n\lambda
{\rm tr}({A}^*)-n\overline{\lambda} {\rm tr} (A) +|{\rm
tr}(A)|^2=n\sigma+|{\rm tr} A|^2,
\end{align}
where $\sigma=n|\lambda|^2-\lambda {\rm
tr}({A}^*)-\overline{\lambda} {\rm tr} (A)$. Moreover,
\begin{align}\label{02}
\|\lambda I-A\|^4&=\left({\rm tr}\left(\left(\lambda I-A\right)\left({\lambda
I-A}\right)^*\right)\right)^2
\nonumber\\
&=\left(n|\lambda|^2-\lambda {\rm tr}({A}^*)-\overline{\lambda} {\rm tr}
(A) +\|A\|^2\right)^2=(\sigma+\|A\|^2)^2.
\end{align}
 Eliminating  $\sigma$ from the formulae (\ref{01}) and
(\ref{02}), we get
\begin{equation}\label{03}
\|\lambda I-A\|^4=\left(\frac{|{\rm tr}(\lambda I-A)|^2- |{\rm tr}
(A)|^2}{n}+\|A\|^2\right)^2.
\end{equation}
Let $s=\left|\lambda-\frac{{\rm tr} (A)}{n}\right|^2$,
$q_A=\|A\|^2-\frac{|{\rm tr} (A)|^2}{n}$. Then
\begin{equation}\label{add02}
|{\rm tr}(\lambda I-A)|^2=n^2s,\end{equation} and \begin{equation}\label{add03}
\|\lambda I-A\|^4=(ns+q_A)^2.\end{equation}
By substituting the equalities \eqref{add02} and \eqref{add03} into (\ref{06}), it follows that
$$n^2s\leq (n-t)\sqrt{(ns+q_A)^2-\Delta_A}.$$
Consequently, by straightforward computations, we have
$$s=\left|\lambda-\frac{{\rm tr} (A)}{n}\right|^2\leq {\frac{n-t}{(2n-t)t}}\left(\frac{n-t}{n}q_A+\sqrt{q_A^2-\frac{(2n-t)t}{n^2}\Delta_A}\right).$$
 The result follows immediately.
 \end{proof}

\begin{proof}[Proof of Theorem {\rm\ref{c11}}]
Notice that
$$\sqrt{\frac{n-t}{(2n-t)t}}\sqrt{\frac{n-t}{n}q_A+\sqrt{q_A^2-\frac{(2n-t)t}{n^2}\Delta_A}}\leq
\sqrt{\frac{n-t}{nt}q_{A}}.$$
Furthermore, the above equality holds  if and only if
$\Delta_A =0$. In other words,  $A$ is normal, i.e., $AA^*=A^*A$. By Theorem \ref{t10}, and taking into account that $\Re_A$ and $\Im_A$ are both normal matrices,  we get the validity of Theorem \ref{c11}.
\end{proof}

\begin{remark}\label{r10}
In terms of estimates on bounds of the largest modulus eigenvalue
$|\lambda|_{\max}$ of matrix $A$,  the
following inequality was given in \cite{HGP801, HGP802},
\begin{equation}\label{05}
\frac{|{\rm tr} (A)|}{n}\leq\left|\lambda\right|_{\max}\leq
\frac{|{\rm tr} (A)|}{n}+ \sqrt{\frac{n-1}{n}q_{A}}.
\end{equation}
We note that the estimates (\ref{00}), (\ref{10}), (\ref{11}) are
sharper than (\ref{05}) in some extent. That is to say, the results
presented in this paper improve the ones given in
\cite{HGP801, HGP802} partially,  and can be taken as
supplements to the conclusions known in \cite{JW01, HGP801, HGP802},
especially for the upper bound estimation of eigenvalues of a matrix.
\end{remark}

\end{document}